\documentclass[preprints,article,accept,moreauthors]{Definitions/mdpi} 
\usepackage{comment}
\usepackage{tikz}
\usepackage{placeins}

\newcommand{\R}{\mathbb{R}}

\newcommand{\bbm}{\begin{bmatrix}}
\newcommand{\ebm}{\end{bmatrix}}

\newcommand{\N}{\operatorname{\mathbb{N}}}

\newcommand{\OR}{\operatorname{\overline{\mathbb{R}}}}

\newcommand{\ratio}{\frac{\vert \xkone-\xstar \vert}{\vert \xk-\xstar \vert }}
\newcommand{\ratioquad}{\frac{\vert \xkone-\xstar \vert}{\vert \xk-\xstar \vert^2 }}
\newcommand{\ratioalpha}{\frac{\vert \xkone-\xstar \vert}{\vert \xk-\xstar \vert^\alpha}}
\newcommand{\xstar}{x_*}
\newcommand{\xk}{x_k}
\newcommand{\xkone}{x_{k+1}}
\newcommand{\seq}{\{ x_k \}}

\newcommand{\oqo}{\overline{Q}_1(x_k)}
\newcommand{\oqoy}{\overline{Q}_1(y_k)}

\newcommand{\ek}{e_k}

\newcommand{\x}{\mathbf{x}}

\firstpage{1} 
\pubvolume{1}
\issuenum{1}
\articlenumber{0}
\pubyear{2026}
\copyrightyear{2026}
\datereceived{ } 
\daterevised{ } % Comment out if no revised date
\dateaccepted{ } 
\datepublished{ } 
\Title{ A 
$p$-step generalization of the Q-order of convergence}

\Author{Gabriel Jarry-Bolduc$^{1}$\orcidA{}}

\AuthorNames{Firstname Lastname, Firstname Lastname and Firstname Lastname}

\address{%
$^{1}$ \quad Department of Mathematics and Statistics, American University of Sharjah, Sharjah, United Arab Emirates; +971 6 515 4423; gabjarry@alumni.ubc.ca}

\abstract{The notion of \emph{Q-order convergence} is arguably the most important tool for describing the asymptotic behavior  of a convergent sequence. Loosely speaking, it captures the``speed''of convergence of an iterative method.  The concept of Q-order convergence is not always well suited for sequences whose errors do not decrease monotonically at every step. In this paper, we introduce the notion of \emph{$p$-step Q-order convergence}. It generalizes the classical notion of Q-order convergence by comparing errors that are 
$p$ iterations apart rather than errors of successive iterates.  This definition recovers classical Q-order convergence as the special case
$p=1$. We show that it extracts meaningful convergence information from certain non-monotonic sequences for which the classical Q-order either does not exist or assigns an overly pessimistic classification. We develop the basic theory of the new notion and locate it within the classical hierarchy by proving that 
$p$-step Q-order at least
$\alpha$ implies \emph{R-order at least 
$\alpha$}. Natural applications include iterative methods whose updates alternate or cycle over multiple steps.}

\keyword{Q-order convergence; R-order convergence; Non-monotonic sequences.} 

\begin{document}

%%%%%%%%%%%%%%%%%%%%%%%%%%%%%%%%%%%%%%%%%%

\section{Introduction}\label{sec:intro}

Iterative methods are a fundamental aspect of numerical analysis and computational sciences. They provide procedures for solving equations, linear or non-linear systems, and optimization problems among others \cite{OrtegaRheinboldt2000,Kelley1995,Kelley1999}. A central question in studying any such method is how fast the iterative method converges. The concept of \emph{order of convergence} quantifies how rapidly the error decreases as the iterates approach a solution. It plays an important role in both the theoretical classification of methods and their practical performance. A rigorous convergence theory  makes it possible to compare algorithms, supports the derivation of error estimates, and clarifies the limits of what iterative procedures can achieve. Since the early work of Cauchy and the significant work later developed in  \cite{Traub1964,Ostrowski1973}, this topic  has  been extensively studied. Several definitions have been created to measure the  ``speed'' of convergence of an iterative method (see \cite{catinas2019survey,catinas2021} for a complete overview of the main definitions).

Arguably the most popular notion to describe the convergence  of a sequence is called  \emph{Q-order of convergence} \cite[Section 2.1.1]{catinas2019survey}. It provides information  on the asymptotic behavior of the ratio  between consecutive errors. The three main types of  $Q$-order convergence are called \emph{linear}, \emph{superlinear}, and \emph{quadratic} convergence. Essentially, quadratic convergence means that the number of correct digits approximately doubles at each iteration $k$  once $k$ is sufficiently large. One of the most famous results regarding  Q-order convergence is the quadratic convergence theorem for \emph{Newton's method}  rigorously proved in  \cite{Kantorovich1948}. A detailed study of these ideas, including the notions of Q-order and \emph{R-order convergence}, was later carried out in  \cite{OrtegaRheinboldt2000}. 
The R-order convergence measures the asymptotic rate of decay of the
error sequence as a whole; it is defined via roots of the errors rather
than quotients of consecutive errors.  It remains well defined
even when the ratios of consecutive errors fluctuate. It is usually preferred for non-monotonic sequences  since the Q-order convergence may not exist or may be misleading.  The relations  between these two notions  are summarized in  \cite{catinas2019survey,catinas2021}.

Important progress on the notion of order of convergence was made in \cite{Potra1989}. The author establishes necessary and sufficient conditions for the equivalence  between Q-order convergence and R-order  convergence.  The characterization of superlinear convergence is clarified in both settings. The concept of superlinear convergence has been defined in (at least) two different ways in the literature, which has led to some confusion, since the two definitions are not equivalent (see \cite[Section 2]{catinas2021} for more details). The question of how reliably one can estimate the order of convergence from a finite sequence of iterates was examined in~\cite{BeyerEbanksQualls1990,jay2001note}. Acceleration and extrapolation methods, such as the \emph{epsilon algorithm} \cite{Shanks1955,Wynn1956,Brezinski1977,Brezinski1980},
motivated notions of convergence beyond the classical Q- and R-order
framework.

%Fixed-point theory offers a natural setting for these questions. The \emph{Banach contraction principle} guarantees linear convergence for contractive mappings, while higher-order behavior arises when successive Fr\'echet derivatives of the iteration operator vanish at the fixed point---an idea developed in detail by Traub~\cite{Traub1964} and extended in operator settings by Argyros~\cite{Argyros2008}. Argyros and his collaborators~\cite{Argyros2008} significantly broadened classical convergence theory by weakening traditional Lipschitz  assumptions, introducing center-Lipschitz conditions, and analyzing inexact and perturbed iterations in Banach spaces. 

A parallel line of research concerns multipoint and composition methods.
The well-known Kung-Traub conjecture \cite{KungTraub1974} asserts that a multipoint method without memory for finding
a simple root of a scalar nonlinear equation cannot exceed order
$2^{d-1}$, where $d$ is the number of evaluations of the function and
its derivatives per step.  Methods attaining this bound are called
\emph{optimal}. Families of optimal and near-optimal high-order schemes
were constructed and analyzed in
\cite{SharmaGuha2007,CorderoTorregrosa2010,Petkovic2013}. The relationship between order, computational cost, and the structure of the
iteration has since become a central theme of the theory.

 In optimization, the convergence rates of \emph{steepest descent}, \emph{Newton}, and \emph{quasi-Newton} methods have been extensively analyzed \cite{Fletcher1987,nocedal2006numerical}.  For instance,  steepest descent converges linearly and can be extremely slow on \emph{ill-conditioned problems}. Incorporating exact or approximate second-order information into the search direction, as in Newton and quasi-Newton methods, improves the convergence rate from linear to quadratic or superlinear, respectively.

Despite this extensive development, certain structural limitations remain. The notion of Q-order convergence is defined through the quotients of
consecutive errors and is therefore ill suited to methods whose progress
is realized over blocks of several iterations. On the other hand, R-order convergence, while insensitive to step-to-step fluctuations, provides only a global bound on the error sequence and does not describe such blockwise behavior either. Multi-step solvers, block-iterative procedures, and schemes that alternate between distinct update rules often display periodic or blockwise reduction.  For instance, a simple convergent sequence defined differently on even and odd indices is given in \cite{jay2001note}. Its error increases at every other step, so the quotients of consecutive errors are unbounded.  Therefore, the sequence possesses no Q-order of convergence. This illustrates a structural limitation of the Q-order framework: a definition based on the ratio of two consecutive errors cannot adequately describe sequences whose errors decrease regularly over blocks of iterations but non-monotonically within them.

This paper proposes a notion of \emph{$p$-step Q-order of convergence},
in which convergence is measured by comparing errors $p$ iterations
apart rather than consecutively. The definition extends the classical Q-order concept and reduces to it when $p=1$. We develop the basic theory related to this notion and clarify its relationship to Q-order and R-order convergence. The $p$-step Q-order convergence  makes it possible to captures features of non-monotonic convergent sequences that the R-order cannot distinguish.  In doing so, we aim to provide a unified framework for analyzing iterative methods whose error reduction follows a multi-step pattern. 

In summary, the main contributions of this paper are the following:
\begin{itemize}
  \item We introduce the $p$-step Q-order of convergence 
  (Definition~\ref{def:pstepdef}).
  \item We characterize when the definition is satisfied.
  \item We show that lower orders are inherited and that validity for 
  step $p$ propagates to every multiple $mp$.
  \item We prove that $p$-step Q-order at least $\alpha$ implies R-order 
  at least $\alpha$ (Theorem~\ref{thm:pstep-implies-Rorder}). Examples 
  show that the new notion sits strictly between the Q-order and the R-order (Figure \ref{fig:hierarchy}).
\end{itemize}

This paper is organized as follows. In Section \ref{sec:prel}, the background results and definitions are introduced.  In Section \ref{sec:pstepQorder}, the notion of p-step Q-order convergence is defined. Theoretical results are provided and the relationship to Q-order convergence and R-order convergence is clarified. Last, the main results  of this paper are summarized and future research directions are proposed in Section \ref{sec:conclusion}.

%--------------------------------------------------------------------------------------
\section{Preliminaries}\label{sec:prel}

In this  paper, we follow the notation  used in the classical textbook Numerical Analysis by Burden \cite{Burden2016}. The set of natural numbers is denoted by $\N$ and equal to $\{1, 2, \dots \}.$  When $\{x_k\}$  is a sequence  of real numbers converging to $x_*$,  we often define   $e_k=\vert x_k-x_* \vert,$  and refer to $e_k$ as the error at iteration $k$.  The sequence $\{e_k\}$ is called  the sequence of errors.   The definition of an eventually decreasing sequence of real numbers follows.

\begin{Definition}[Eventually decreasing sequence]\label{def:eventDecreasing}
A sequence of real numbers $\{a_k\}_{k \ge 0}$ is \emph{eventually decreasing} 
if there exists $K \ge 0$ such that $a_{k+1} < a_k$ for all $k \ge K$.
\end{Definition}

Since the definition of Q-order convergence varies slightly across the
literature, we present three common versions and identify the one used
as the default throughout the paper.  Note that the definitions may be  extended to sequences of vectors  in $\R^n$ by taking the norm rather than the absolute value.  However, 
the choice of norm influences the value of the constant in the following definition, and particular care must be exercised if defining \emph{sublinear convergence}.

\begin{Definition}[Numerical Analysis textbook definition] \cite[Definition 2.7]{Burden2016} \label{def:burden}

Let  $\{ x_k\}$ be a sequence of real numbers that converges to $x_*$ with $\xk\neq \xstar.$ If there exist $c>0$ and $\alpha \geq 1$ such that   $$\lim_{k \to \infty} \ratioalpha=c,$$
then $\seq$ converges to $\xstar$ with order $\alpha$ and asymptotic error constant $c.$ 

\noindent If $\alpha=1$ and $c<1$, then the convergence  is said to be \emph{linear}.

\noindent If $\alpha=2,$ then the convergence is said to be \emph{quadratic}.  
\end{Definition}

Based on this definition,  if  $\alpha=1$ and $c=0,$ we may not  conclude that  the sequence is linearly   convergent since the constant $c$ is taken to be positive in the definition. 
Later in the textbook, superlinear convergence is defined as follows: if $\alpha=1$ and $c=0,$ then the sequence is said to be \emph{superlinear}. This is somewhat confusing since the definition assumes $c>0.$ Note  that Definition \ref{def:burden} is sometimes called \emph{C-order convergence}.  

Next, we provide the definitions of linear, superlinear and quadratic according to the classical textbook Numerical Optimization \cite{nocedal2006numerical}.

\begin{Definition}[Numerical Optimization textbook definition] \cite[Appendix A.2]{nocedal2006numerical} \label{def:nocedal}

    Let $\seq$ be a sequence of real numbers that converges to $\xstar.$ We say that the convergence is \emph{Q-linear} if  there is a constant $c_1 \in (0,1)$ such that $$\frac{\vert \xkone-\xstar \vert}{\vert \xk-\xstar \vert } \leq c_1, \quad \text{for all $k$ sufficiently large.}  $$
    The convergence is said to be \emph{$Q$-superlinear} if  $$\lim_{k \to \infty} \ratio=0.$$
    The convergence is said to be \emph{Q-quadratic} if  there exists a positive constant $c_2$  such that $$ \ratioquad \leq c_2, \quad \text{for all $k$ sufficiently large}.$$
\end{Definition}
The exact same definition is used in  several optimization textbooks such as \cite{andrei2022modern,dennisschnabel1983,Kelley1995}. Another almost identical definition is proposed in \cite[Section 3.3.1]{conn2000trust}. The only difference is that the definition is provided without forming a quotient which makes it possible for a convergent sequence to have $\vert \xk-\xstar \vert=0$ for all $k$ larger than some $K\geq 0.$ From Definition \ref{def:nocedal}, we get that quadratic convergence implies superlinear convergence, and superlinear convergence implies linear convergence. However, it is not necessarily the  case using Definition \ref{def:burden}. Indeed,  superlinear convergence does not imply linear  convergence.    

Finally,  the  definition that will be used in this paper is introduced. This definition applies to a wider class of sequences since a \emph{limit superior} is used.  Using a limit superior, the  ratios of consecutive errors always exists in  the extended  nonnegative real  numbers
$[0,+\infty]$, whereas the corresponding limit may fail to exist when these ratios oscillate.

\begin{Definition}[Default definition {\cite[Sect.~2.1.1]{catinas2019survey}}]
\label{def:catinas}
Let $\{\xk\}$ be a sequence converging to $\xstar$ with $\xk \neq \xstar$ for 
all large $k$, and define $\ek = \vert \xk - \xstar \vert$. For 
$\alpha \geq 1$, define the \emph{quotient convergence factors} and their supremum 
limits
\[
Q_\alpha(x_k) = \frac{e_{k+1}}{e_k^{\alpha}}, 
\qquad 
\overline{Q}_\alpha(x_k)= \limsup_{k \to \infty} Q_\alpha(x_k).
\]
We say that the sequence $\{\xk\}$
\begin{itemize}
    \item has \emph{no Q-order} if $\overline{Q}_1(x_k) =\infty$;
    \item converges \emph{Q-sublinearly} if $1 \leq \overline{Q}_1(x_k) <\infty$;
    \item converges \emph{at least Q-linearly} (\emph{at least Q-order $1$}) if 
          $\overline{Q}_1(x_k) < 1$;
    \item converges \emph{at least Q-superlinearly} (\emph{at least Q-superorder $1$}) if 
          $\overline{Q}_1(x_k) = 0$.
\end{itemize}
For $\alpha > 1$, we say that the sequence  $\{\xk\}$ 
\begin{itemize}
    \item converges with \emph{Q-order at least $\alpha$} if $\overline{Q}_\alpha(x_k) < \infty$;
    \item converges with \emph{Q-superorder at least $\alpha$} if $\overline{Q}_\alpha(x_k) = 0$.
\end{itemize}
\end{Definition}

 When $\alpha=2,$ the order is usually  called \emph{quadratic}. 
 
The limit superior in Definition~\ref{def:catinas} ensures that 
$\overline{Q}_\alpha(x_k) = \limsup_{k\to\infty} Q_\alpha(x_k)$ 
exists in the extended nonnegative real numbers. However, it does not 
resolve the lack of informativeness for non-monotonic sequences: 
whenever the error increases along a subsequence, 
$\overline{Q}_\alpha(x_k) = \infty$ for every $\alpha \ge 1$, 
and the definition assigns no Q-order to the sequence, regardless of 
its overall rate of convergence. In the remaining of this paper, Definition \ref{def:catinas} is used as the default  definition for Q-order convergence.

A weaker notion of convergence order is obtained by bounding the errors
by an auxiliary sequence of known order rather than by relating
consecutive errors directly.

\begin{Definition}[R-order of convergence {\cite[Definition 4.4]{jay2001note}}]
\label{def:rorder}
Let $\alpha \geq 1$. A sequence $\{\xk\}$ converges to $\xstar$ with 
\emph{R-order at least $\alpha$} if there exists a sequence $\{\beta_k\}$ of 
positive real numbers with $\ek \leq \beta_k$ for all sufficiently large 
$k$, converging to zero with Q-order at least $\alpha$ in the sense of 
Definition~\ref{def:catinas}.
\end{Definition}

Unlike the Q-order, which constrains the ratio of consecutive errors at every step, the R-order of convergence requires only that the errors be dominated by an auxiliary sequence converging with the corresponding Q-order. It is therefore insensitive to stepwise fluctuations. For the same reason, it retains no information about the actual step-to-step behavior of the errors. We are now ready to introduce the novel notion of $p$-step Q-order convergence.

%-------------------------------------------------------------------------

\section{Main Results} \label{sec:pstepQorder}

This section introduces the notion of $p$-step Q-order convergence, 
in which the ratios of errors are taken over $p$ iterations for some
$p \in \N$, rather than a single iteration as in 
Definition~\ref{def:catinas}. As we will see, this flexibility makes 
it possible to extract meaningful convergence information for certain 
non-monotonic sequences. We first provide some observations on Q-order 
convergence and decreasing sequences of errors.

In Definition \ref{def:catinas}, note that  $\overline{Q}_1(x_k)<1$ implies that the sequence of errors is  eventually decreasing. The converse is not true. For instance, consider the sequence  defined  by $x_k = \tfrac{1}{k}$, which converges 
to $x^* = 0$ with errors $e_k = \tfrac{1}{k}$. The correct statement is that if the sequence of errors $\{\ek\}$ is  eventually decreasing, then $\oqo\leq 1.$  
When $\oqo=1,$ then the sequence of errors may or may not be eventually decreasing.  For instance, consider the sequence $x_k=\frac{1}{k}+\frac{(-1)^k}{k^2}.$ Then $\oqo=1,$  and the sequence of errors $e_k=x_k$ is not eventually decreasing.

It is a necessary condition for a sequence of errors to be eventually decreasing to have convergence  at least Q-linear.  If $\overline{Q_1}(x_k)>1,$  then it implies  that the sequence of errors is not eventually decreasing. Moreover,  we get that $\overline{Q}_1(x_k)=\infty$  if and only if the the ratio of errors is unbounded.  

To summarize, we have the following results regarding not eventually decreasing sequence of errors $\{e_k\}$.

\begin{Proposition} \label{prop:nondecreasing}
    Let $\xk$  be a sequence that converges to $\xstar$ with $\xk \neq \xstar$ for  large $k$. Define $\ek=\vert x_k -\xstar \vert.$ If $\{\ek\}$ is not eventually decreasing, then $x_k$ converges Q-sublinearly or has no Q-order.
\end{Proposition}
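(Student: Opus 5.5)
We argue by contraposition, using only the classification in Definition~\ref{def:catinas}. For a convergent sequence with $\xk\neq\xstar$ for large $k$, the quantity $\oqo=\limsup_{k\to\infty} e_{k+1}/e_k$ is a well-defined element of $[0,\infty]$. The four regimes $\oqo<1$, $1\le\oqo<\infty$, and $\oqo=\infty$ therefore exhaust all possibilities; the superlinear case $\oqo=0$ is a subcase of $\oqo<1$. It thus suffices to show that $\oqo<1$ forces $\{\ek\}$ to be eventually decreasing. The conclusion then follows: if $\{\ek\}$ is not eventually decreasing, we must have $\oqo\ge 1$, which by definition means Q-sublinear convergence (if finite) or no Q-order (if infinite).

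The key step is the standard $\limsup$ argument. Suppose $\oqo=L<1$ and pick $\varepsilon>0$ with $L+\varepsilon<1$, for instance $\varepsilon=(1-L)/2$. By the definition of the limit superior, there is $K$ such that $e_{k+1}/e_k\le L+\varepsilon<1$ for all $k\ge K$. We also enlarge $K$ so that $e_k>0$ for $k\ge K$, which the hypothesis $\xk\neq\xstar$ for large $k$ permits. Then $e_{k+1}\le (L+\varepsilon)e_k<e_k$ for every $k\ge K$, the strict inequality using $e_k>0$. This is precisely Definition~\ref{def:eventDecreasing}.

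There is essentially no obstacle. The only points requiring care are the two just handled. First, strictness of the decrease requires $e_k>0$, which is why the nonvanishing assumption on the errors is used. Second, the $\limsup$ bound gives a strict inequality only after choosing $\varepsilon$ so that $L+\varepsilon$ stays below $1$; one cannot use $L$ itself, since the ratios may exceed $L$ infinitely often. We note that the converse direction (eventually decreasing implies $\oqo\le 1$) is not needed here, and the boundary case $\oqo=1$ is correctly placed in the ``Q-sublinear'' category by Definition~\ref{def:catinas}.
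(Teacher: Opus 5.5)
Your proof is correct and is essentially the paper's argument run in contrapositive form: the paper directly extracts infinitely many indices with $e_{k_i+1}\ge e_{k_i}$ to get $\overline{Q}_1(x_k)\ge 1$, whereas you show that $\overline{Q}_1(x_k)<1$ forces eventual strict decrease via the usual $\varepsilon$-margin under the $\limsup$. The paper also remarks that $\overline{Q}_\alpha(x_k)=\infty$ for every $\alpha>1$, which your argument omits but does not need, since the labels ``Q-sublinear'' and ``no Q-order'' in Definition~\ref{def:catinas} depend only on $\overline{Q}_1(x_k)$.
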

\begin{proof}
Since $\{\ek\}$ is not eventually decreasing, there exist infinitely many 
indices $k_i, i \in \N,$ such that $e_{k_i+1} \geq e_{k_i}$. This means
$Q_1(x_{k_i}) \geq 1$ for all $i$. We get
\[
\overline{Q}_1(x_k) \;\geq\; \limsup_{i\to\infty} Q_1(x_{k_i}) \;\geq\; 1.
\]
So the convergence is not at least Q-linear.  Clearly, for any 
$\alpha > 1$,
$\overline{Q}_\alpha(x_k) = \infty$ (since Q-order at least $\alpha$ for $\alpha>1$ implies Q-order at least $\tilde{\alpha}$ for all $\tilde{\alpha} \in [1, \alpha)$). Therefore,  the sequence $\{\xk\}$ converges Q-sublinearly or has no Q-order.
\end{proof}

As noted by several authors, this can be  misleading as illustrated in the next example. 

\begin{Example} \label{ex:jaysExample}
Consider the sequence $\{y_k\}_{k=0}^\infty$ given in \cite{jay2001note}, defined by
\[
y_k = 
\begin{cases}
2^{-2^{k}}, & k \text{ even},\\[2pt]
2^{-3^{k}}, & k \text{ odd}.
\end{cases}
\]
The sequence converges to $y_* = 0$, so that $e_k = y_k$. We get
\[
Q_1(y_{2k+1}) = \frac{y_{2k+2}}{y_{2k+1}} = 2^{\,3^{2k+1} - 2^{2k+2}} 
\longrightarrow \infty  \quad \text{as} \quad k \to \infty.
\]
Hence $\oqoy = \infty$. By Definition~\ref{def:catinas}, the sequence 
has no Q-order.
\end{Example}

However, observe that  $\{y_k\}$ converges to $\xstar = 0$ faster than $x_k = 2^{-k}$ 
in the sense that $y_k < x_k$ for all $k \geq 0$.  The sequence $\{x_k\}$ converges 
at least Q-linearly.  This illustrates that the 
absence of a Q-order for $\{y_k\}$ not a statement about slow convergence in this case, but about 
irregular stepwise behavior.

Proposition \ref{prop:nondecreasing} and Example \ref{ex:jaysExample} show 
that all   definitions  of  Q-order  are  not well-suited
for error sequences that are not eventually decreasing. It either assigns 
no order or reduces to Q-sublinearity, regardless of how fast the sequence 
converges. The following definition is designed to capture the convergence 
behavior of such sequences.

\begin{Definition}[$p$-step Q-order of convergence]\label{def:pstepdef}
Let $\xk$ be a sequence converging to $\xstar$ with $\xk \neq \xstar$ for 
all large $k$, and define $\ek= \vert \xk - \xstar \vert$. For $p \in \N$ 
and $\alpha \geq 1$,  define the \emph{$p$-step quotient convergence factors} by
\[
Q_\alpha^{(p)}(x_k) = 
\begin{cases}
\left( \dfrac{e_{k+p}}{e_k} \right)^{\!1/p} 
    & \text{if } \alpha = 1, \\[10pt]
\left( \dfrac{e_{k+p}}{e_k^{\alpha^p}} \right)^{\!\frac{\alpha-1}{\alpha^p-1}} 
    & \text{if } \alpha > 1,
\end{cases}
\]
and define
\[
\overline{Q}_\alpha^{(p)}(x_k) = \limsup_{k \to \infty} Q_\alpha^{(p)}(x_k).
\]
 We say that the sequence $\{\xk\}$
\begin{itemize}
    \item has no \emph{$p$-step Q-order} if $\overline{Q}_1^{(p)}(x_k) = \infty$;
    \item has \emph{no Q-order of any step length} if $\overline{Q}_1^{(p)}(x_k) = \infty$ for any $p \in \N.$
    \item converges \emph{$p$-step Q-sublinearly} if 
          $1 \leq \overline{Q}_1^{(p)}(x_k) <\infty $;
    \item converges  \emph{$p$-step at least Q-linearly} (\emph{$p$-step at least Q-order 1}) if 
          $\overline{Q}_1^{(p)}(x_k) < 1$;
    \item converges \emph{$p$-step at least Q-superlinearly} (\emph{$p$-step at least Q-superorder 1}) if 
          $\overline{Q}_1^{(p)}(x_k) = 0$.
\end{itemize}
For $\alpha > 1$, we say that $\{\xk\}$ 
\begin{itemize}
    \item converges with \emph{$p$-step  Q-order at least $\alpha$} if 
          $\overline{Q}_\alpha^{(p)}(x_k) < \infty$;
    \item converges  with \emph{$p$-step at least Q-superorder at least  $\alpha$} if 
          $\overline{Q}_\alpha^{(p)}(x_k) = 0$.
\end{itemize}
\end{Definition}

The definition is designed so that $\alpha$ retains its classical meaning 
as an average convergence order per iteration, regardless of the step 
$p$. Indeed, if a sequence satisfies  $e_{k+1} = C e_k^{\alpha}$ 
with $\alpha > 1$ and $C > 0$, then composing $p$ steps gives
\[
e_{k+p} = C^{\,1 + \alpha + \cdots + \alpha^{p-1}} e_k^{\alpha^{p}} 
        = C^{\frac{\alpha^{p}-1}{\alpha-1}}\, e_k^{\alpha^{p}}.
\]
So the exponent of  the error $e_k$ accumulated over a block of $p$ iterations is 
$\alpha^{p}$, and the constant $C$ accumulates the geometric sum 
$\tfrac{\alpha^{p}-1}{\alpha-1}$. The $p$-step quotient convergence factor $Q_\alpha^{(p)}(x_k)$ 
compares $e_{k+p}$ against $e_k^{\alpha^{p}},$ and extracts the root 
$\tfrac{\alpha-1}{\alpha^{p}-1}$, so that $Q_\alpha^{(p)}(x_k) = C$ for 
any $p \in \N$. This means that two $p$-step quotient convergence factors with different values of  $p$ are measured on a 
common per-iteration scale and may be compared directly. The case 
$\alpha = 1$ follows the same idea.   Note that 
$\tfrac{\alpha-1}{\alpha^{p}-1} \to \tfrac{1}{p}$ as $\alpha \to 1^{+}$. As a function of $\alpha,$ the $p$-step quotient convergence factor $Q_\alpha^{(p)}(x_k)$ is a continuous function.  Additionally, note that when  $p = 1,$  both cases in  $Q_\alpha^{(p)}(x_k)$
 reduce to the quotient convergence factor $Q_\alpha(x_k)$ in  Definition \ref{def:catinas}.

Two more properties  of the definition are worth emphasizing. First, the 
limit superior ranges over all indices $k$, not over a subsequence. 
The resulting notion is a uniform, quotient-type condition in the spirit 
of the classical Q-order, rather than a statement about a chosen 
subsequence. The accumulated reduction over every 
$p$ consecutive iterations is controlled. For instance,  if a sequence  is 3-step Q-order at least 2,  then   every 3 iterations   the numbers of correct digits is multiplied by approximately $2^3$ (at least) once $k$ is sufficiently large. Second, the classification is 
stated for each fixed $p \in \N$ separately (except for  no Q-order of any step length). For instance, a sequence may have 
 no $1$-step Q-order while converging with $2$-step Q-order at least 
$2$, as Example~\ref{ex:jaysExample} showed.  Note that it is possible for a sequence to have no Q-order of any step length.   The following example illustrates this situation.
\begin{Example} \label{ex:squares}
Consider the sequence $\seq_{k=0}^\infty$ defined by
\begin{equation*}
\xk=\begin{cases}
     &10^{-2^k}, \quad \text{if $k$ is not a perfect square,} \\
    &10^{-k}, \quad \text{if $k$ is a perfect square.}
\end{cases}
\end{equation*}
Then $\{x_k\}$ has no $Q$-order of any step length. Indeed, No fixed block length can absorb the slow terms $10^{-k}$, due to the fact that the gaps between 
consecutive perfect squares grow without bound.   For every $p \in \N$, 
infinitely many windows of length $p$ end at a perfect square. Fix 
$p \in \N$ and let $k = m^2-p$. For all large $m$, the index $k$ is not a 
perfect square, so
\[
Q_1^{(p)}(x_{m^2-p}) 
= \left( \frac{10^{-m^2}}{10^{-2^{m^2 - p}}} \right)^{\!1/p} 
= 10^{\frac{2^{m^2-p} - m^2}{p}} \longrightarrow \infty 
\quad \text{as} \quad m \to \infty.
\]
Therefore,
$\overline{Q}_1^{(p)}(x_k) = \infty$ for every $p \in \N$, and 
$\{x_k\}$ has no Q-order of any step length.
\end{Example}

One may wonder if a definition with the step size given as  a function of $k$ could be developed to deal with the previous example. Although theoretically possible, allowing
$p$ to depend on 
$k$ would reduce the definition to an order statement about a chosen subsequence of iterates.  Keeping 
$p$ constant makes the 
$p$-step Q-order a uniform condition: every window of 
$p$ consecutive iterations is controlled. This preserves the interpretation of
$\alpha$ as an (average) order  of convergence per iteration.

We now clarify that if a sequence is $p$ in Q-order at least $\alpha$  for some $\alpha>1$ and $p \in \N,$ then the sequence is $p$ in Q-order at least $\tilde{\alpha}$  for all $\tilde{\alpha} \in [1, \alpha).$ This is not surprising since a similar statement can be made for the definition of Q-order convergence (Definition \ref{def:catinas}).

\begin{Proposition}\label{prop:monotoneAlpha}
Let $\{\xk\}$ converge to $\xstar$ with $\xk \neq \xstar$ for all large 
$k$ and let $p \in \N$. Suppose $\{\xk\}$ converges with $p$-step Q-order 
at least $\alpha$ for some $\alpha > 1$. Then for every 
$\tilde{\alpha} \in [1, \alpha)$, the sequence $\seq$ converges with $p$-step 
Q-superorder at least $\tilde{\alpha}$. In particular, it converges with 
$p$-step Q-order at least $\tilde{\alpha}$.
\end{Proposition}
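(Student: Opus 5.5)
The plan is to convert the hypothesis $\overline{Q}_\alpha^{(p)}(x_k)<\infty$ into an explicit inequality between $e_{k+p}$ and $e_k$. Then substitute that inequality into the factor $Q_{\tilde\alpha}^{(p)}(x_k)$ and show it tends to $0$, because a positive power of $e_k$ is left over.

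First, set $M=\overline{Q}_\alpha^{(p)}(x_k)+1$. Adding $1$ makes $M$ strictly positive and finite even when the limit superior is $0$. By the definition of the limit superior, there is $K$ such that for all $k\ge K$ we have $e_k\neq 0$ and $Q_\alpha^{(p)}(x_k)\le M$. Raising both sides to the positive power $\tfrac{\alpha^p-1}{\alpha-1}$ gives
\[
e_{k+p}\le C\,e_k^{\alpha^p},\qquad C:=M^{\frac{\alpha^p-1}{\alpha-1}}>0,\qquad k\ge K.
\]

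Next, fix $\tilde\alpha\in[1,\alpha)$ and let $r>0$ denote the exponent in Definition~\ref{def:pstepdef} for $\tilde\alpha$. That is, $r=1/p$ if $\tilde\alpha=1$, and $r=\tfrac{\tilde\alpha-1}{\tilde\alpha^p-1}$ if $\tilde\alpha>1$. For $k\ge K$, monotonicity of $t\mapsto t^r$ on $[0,\infty)$ gives
\[
0\le Q_{\tilde\alpha}^{(p)}(x_k)=\left(\frac{e_{k+p}}{e_k^{\tilde\alpha^p}}\right)^{r}\le \left(C\,e_k^{\alpha^p-\tilde\alpha^p}\right)^{r}=C^{r}\,e_k^{r(\alpha^p-\tilde\alpha^p)}.
\]
Since $1\le\tilde\alpha<\alpha$ and $p\ge1$, we have $\alpha^p>\tilde\alpha^p$, so the exponent $r(\alpha^p-\tilde\alpha^p)$ is strictly positive. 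Because $e_k\to0$, the right-hand side tends to $0$. Hence $\overline{Q}_{\tilde\alpha}^{(p)}(x_k)=0$, which is $p$-step Q-superorder at least $\tilde\alpha$, in both the case $\tilde\alpha=1$ and the case $\tilde\alpha>1$.

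The ``in particular'' clause is then immediate. If $\tilde\alpha=1$, we have $\overline{Q}_1^{(p)}(x_k)=0<1$, which is $p$-step at least Q-linear. If $\tilde\alpha>1$, we have $\overline{Q}_{\tilde\alpha}^{(p)}(x_k)=0<\infty$.

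The only delicate points are bookkeeping rather than genuine obstacles. One is ensuring the constant $C$ is finite and positive, which the choice $M=\overline{Q}_\alpha^{(p)}+1$ handles. The other is using the correct exponent $r$ in each of the two cases of Definition~\ref{def:pstepdef}. The key observation to state carefully is the strict inequality $\alpha^p>\tilde\alpha^p$, which produces the vanishing power of $e_k$.
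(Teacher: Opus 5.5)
Your proof is correct and follows essentially the same route as the paper. Both arguments turn the finite limit superior into $e_{k+p}\le C\,e_k^{\alpha^p}$, leave the vanishing factor $e_k^{\alpha^p-\tilde\alpha^p}$, and pass through the positive root in either case of the definition; the only cosmetic difference is that you apply the root before letting $k\to\infty$, whereas the paper applies it after.
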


\begin{proof}
Set $M= \overline{Q}_\alpha^{(p)}(x_k) < \infty$. Then 
$Q_\alpha^{(p)}(x_k) \leq M + 1$ for all $k$ sufficiently large.  
Raising this inequality to the power $\tfrac{\alpha^{p}-1}{\alpha-1} > 0$ 
gives
\[
e_{k+p} \leq C\, e_k^{\,\alpha^{p}}, 
\quad \text{where} \quad  C= (M+1)^{\frac{\alpha^{p}-1}{\alpha-1}},
\]
for all sufficiently large $k$. Let $\tilde{\alpha} \in [1, \alpha)$. Then
\[
\frac{e_{k+p}}{e_k^{\,\tilde{\alpha}^{p}}} 
\;\leq\; C\, e_k^{\,\alpha^{p} - \tilde{\alpha}^{p}} 
\longrightarrow 0 \quad   \text{as}  \quad k \to \infty,
\]
since $\alpha^{p}-\tilde{\alpha}^{p}>0$ and $e_k \to 0$. By Definition~\ref{def:pstepdef}, $Q_{\tilde{\alpha}}^{(p)}(x_k)$ is the 
quotient $e_{k+p}/e_k^{\tilde{\alpha}^{p}}$ raised to the positive power $\tfrac{\tilde{\alpha}-1}{\tilde{\alpha}^{p}-1}$ if 
$\tilde{\alpha} > 1$, and $\tfrac{1}{p}$ if $\tilde{\alpha} = 1$, in which 
case $\tilde{\alpha}^{p} = 1$. Since this quotient tends to $0$, 
$Q_{\tilde{\alpha}}^{(p)}(x_k) \to 0$. Therefore, 
$\overline{Q}_{\tilde{\alpha}}^{(p)}(x_k) = 0$.
\end{proof}

We now clarify conditions for Definition \ref{def:pstepdef}to be satisfied for some $p$.  We say that Definition \ref{def:pstepdef} is \emph{satisfied} if the sequence converges with $p$-step at Q-order at least $\alpha$ for some $\alpha \geq 1$ and for some $p \in \N.$

\begin{Proposition}\label{prop:satisfied}
Let $\{\xk\}$ be a sequence converging to $\xstar$ with $\xk \neq \xstar$ 
for all large $k$. Then Definition~\ref{def:pstepdef} is satisfied for some 
$p \in \N$ if and only if there exist $c \in (0,1)$ and $K \in \N$ such that
\begin{equation}\label{eq:contraction}
    e_{k+p} \leq c^{\,p}\, e_k \quad \text{for all } k \geq K.
\end{equation}
\end{Proposition}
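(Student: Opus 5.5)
Both directions use the same fixed $p$. We need to pin down what ``satisfied'' means: the sequence converges with $p$-step Q-order at least $\alpha$ for some $\alpha \ge 1$. For $\alpha=1$ this means $\overline{Q}_1^{(p)}(x_k)<1$, and for $\alpha>1$ it means $\overline{Q}_\alpha^{(p)}(x_k)<\infty$. The plan is to reduce both cases to the $\alpha=1$ condition and then show that this condition is equivalent to \eqref{eq:contraction}.

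\emph{Reduction.} If the definition holds with some $\alpha>1$, Proposition~\ref{prop:monotoneAlpha} with $\tilde{\alpha}=1$ gives $\overline{Q}_1^{(p)}(x_k)=0<1$. Hence in every case satisfaction is equivalent to $\overline{Q}_1^{(p)}(x_k)<1$.

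\emph{Forward direction.} Suppose $L:=\overline{Q}_1^{(p)}(x_k)<1$. Pick any $c\in(\max\{L,0\},1)$; for instance $c=(1+L)/2$ works, and it is positive. By the definition of limit superior, $\left(e_{k+p}/e_k\right)^{1/p}\le c$ for all $k\ge K$. Here $K$ is chosen large enough that $e_k\neq 0$ for $k\ge K$. Raising to the power $p$ gives $e_{k+p}\le c^p e_k$, which is \eqref{eq:contraction}.

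\emph{Converse.} If \eqref{eq:contraction} holds with $c\in(0,1)$, then $Q_1^{(p)}(x_k)=(e_{k+p}/e_k)^{1/p}\le c$ for all $k\ge K$, so $\overline{Q}_1^{(p)}(x_k)\le c<1$. Thus the sequence converges $p$-step at least Q-linearly, which is the definition with $\alpha=1$.

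\emph{Expected difficulty.} The only delicate point is bookkeeping rather than analysis. We must choose $c$ strictly positive even when $L=0$, and we must ensure $K$ exceeds the index beyond which $e_k\neq 0$. We must also make explicit that the $p$ in \eqref{eq:contraction} is the same $p$ for which the definition is satisfied. Everything else is the standard translation between a limit superior bound and an eventual inequality.
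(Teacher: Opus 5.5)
Your proof is correct and follows the paper's route: reduce to $\alpha=1$ via Proposition~\ref{prop:monotoneAlpha}, choose $c$ strictly between $\overline{Q}_1^{(p)}(x_k)$ and $1$ to get the eventual bound $e_{k+p}\le c^p e_k$, and read the converse off directly. Your extra care that $c>0$ when the limit superior is $0$ is harmless but automatic, since the open interval $\bigl(\overline{Q}_1^{(p)}(x_k),1\bigr)$ already excludes $0$.
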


\begin{proof}
  Suppose Definition~\ref{def:pstepdef} is satisfied. By Proposition \ref{prop:monotoneAlpha}, we may assume $\alpha=1$. That is   
$\overline{Q}_1^{(p)}(x_k) < 1$ for some $p \in \N$. Choose 
$c \in \bigl(\overline{Q}_1^{(p)}(x_k),\, 1\bigr)$. By the definition of 
the limit superior, $Q_1^{(p)}(x_k) \leq c$ for all $k$ sufficiently large. Therefore, we obtain 
Equation \eqref{eq:contraction}. Conversely, Equation \eqref{eq:contraction} yields 
$Q_1^{(p)}(x_k) \leq c$ for all $k \geq K$.  It follows that  
$\overline{Q}_1^{(p)}(x_k) \leq c < 1$ and Definition~\ref{def:pstepdef} is 
satisfied.
\end{proof}

\begin{Proposition}\label{prop:nonmonotone-conditions}
Let $\{\xk\}$ converge to $\xstar$ with $\xk \neq \xstar$ for all large $k$, 
and let $p \in \N$. Define $\ek=\vert x_k -\xstar \vert.$ Then
\begin{equation}\label{eq:interleaved}
\overline{Q}_1^{(p)}(x_k) 
= \max_{0 \leq j \leq p-1} \limsup_{i \to \infty} 
\left( \frac{e_{j+(i+1)p}}{e_{j+ip}} \right)^{\!1/p}.
\end{equation}
Consequently, Definition~\ref{def:pstepdef} is satisfied for $p$ if and only 
if there exists $c \in (0,1)$ such that each subsequence 
$\{e_{j+ip}\}_{i=0}^\infty$, $j = 0, 1,  \dots, p-1$, eventually satisfies 
$e_{j+(i+1)p} \leq c^{p}\, e_{j+ip}$. \\ In particular, if 
Definition~\ref{def:pstepdef} is satisfied for $p$, then  $\{e_k\}$ eventually admits no run of $p$ consecutive non-decreasing 
steps.
\end{Proposition}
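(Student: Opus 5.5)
The proof has three parts: the identity \eqref{eq:interleaved}, the characterization of satisfaction for $p$, and the run statement.

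\emph{The identity.} Every sufficiently large index $k$ can be written uniquely as $k=j+ip$ with $j\in\{0,\dots,p-1\}$ and $i\ge 0$, namely $j=k \bmod p$. Hence the sequence $Q_1^{(p)}(x_k)=(e_{k+p}/e_k)^{1/p}$ is the interleaving of the $p$ subsequences $a^{(j)}_i=(e_{j+(i+1)p}/e_{j+ip})^{1/p}$. I would then prove the elementary fact that the limit superior of a sequence partitioned into finitely many subsequences equals the maximum of their limit superiors, with values in $[0,\infty]$.
\begin{itemize}
\item The inequality ``$\ge$'' holds because the limit superior along any subsequence is at most the full limit superior.
\item For ``$\le$'', take a subsequence of $Q_1^{(p)}(x_k)$ converging to $\overline{Q}_1^{(p)}(x_k)$. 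By pigeonhole, infinitely many of its terms lie in a single residue class $j$. Those terms form a subsequence of $a^{(j)}$, so $\limsup_i a^{(j)}_i \ge \overline{Q}_1^{(p)}(x_k)$.
\end{itemize}
This pigeonhole step is the only real content and I expect it to be the main (mild) obstacle. One must handle the value $\infty$ carefully, which works because the same pigeonhole argument applies to a subsequence tending to infinity.

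\emph{Characterization.} I interpret ``Definition~\ref{def:pstepdef} satisfied for $p$'' as in the proof of Proposition~\ref{prop:satisfied}: by Proposition~\ref{prop:monotoneAlpha} it reduces to $\overline{Q}_1^{(p)}(x_k)<1$.
\begin{itemize}
\item Suppose $\overline{Q}_1^{(p)}(x_k)<1$. By \eqref{eq:interleaved} every $\limsup_i a^{(j)}_i<1$. Pick $c$ strictly between the maximum and $1$. Then for each $j$ there is $I_j$ with $a^{(j)}_i\le c$ for $i\ge I_j$, that is, $e_{j+(i+1)p}\le c^p e_{j+ip}$.
\item Conversely, a common $c<1$ satisfying these eventual inequalities gives $\limsup_i a^{(j)}_i\le c$ for each $j$. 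Taking the maximum over the finitely many $j$ yields $\overline{Q}_1^{(p)}(x_k)\le c<1$.
\end{itemize}
The finiteness of the set of $j$ is what lets a single $c$ and a single threshold $\max_j I_j$ work.

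\emph{No runs.} Assume satisfaction. Taking $K=\max_j(j+I_jp)$, we get $e_{k+p}\le c^p e_k<e_k$ for all $k\ge K$. Suppose there were a run $e_k\le e_{k+1}\le\dots\le e_{k+p}$ with $k\ge K$. Chaining the inequalities gives $e_{k+p}\ge e_k$, which is a contradiction. So eventually no $p$ consecutive steps are all non-decreasing.
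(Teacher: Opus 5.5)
Your proposal is correct and follows essentially the same route as the paper: the residue-class decomposition $k=j+ip$, the fact that the limit superior over a finite partition into subsequences is the maximum of the subsequential limit superiors, the choice of $c$ strictly between that maximum and $1$ with thresholds $I_j$, and $K=\max_j(j+I_jp)$ for the run argument. The only difference is that you prove the finite-partition limsup fact with a pigeonhole argument (including the value $\infty$), whereas the paper simply states it.
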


\begin{proof}
Let $K \in \N$ be such that $x_k \neq \xstar$ for all $k \geq K$, so
that $Q_1^{(p)}(x_k)$ is well defined for all $k \geq K$. Writing
$k = j + ip$ with $j \in \{0, 1, \dots, p-1\}$, we have
$Q_1^{(p)}(x_k) = \bigl(e_{j+(i+1)p}/e_{j+ip}\bigr)^{1/p}$. So the $p$
residue classes modulo $p$ partition $\{Q_1^{(p)}(x_k)\}_{k \geq K}$
into finitely many subsequences. Since the limit superior of a sequence
equals the largest of the limit superiors of the subsequences in such a
partition, Equation \eqref{eq:interleaved} follows.

Suppose Definition \ref{def:pstepdef} is satisfied. That is  $\overline{Q}_1^{(p)}(x_k) < 1.$ Pick
$c \in \bigl(\overline{Q}_1^{(p)}(x_k), 1\bigr)$. By
Equation \eqref{eq:interleaved}, each of the $p$ limit superiors is less than
$c$, so for each $j,$ there exists   $I_j$ such that
$e_{j+(i+1)p} \leq c^p e_{j+ip}$ for all $i \geq I_j$. Conversely, if
such a $c \in (0,1)$ exists, then each limit superior in
Equation \eqref{eq:interleaved} is at most $c$. Therefore,
$\overline{Q}_1^{(p)}(x_k) \leq c < 1$.

Finally, suppose Definition~\ref{def:pstepdef} is satisfied for $p$.
Set $K'= \max_{0 \leq j \leq p-1} (j + I_j p)$. Then
$e_{k+p} \leq c^p\, e_k < e_k$ for all $k \geq K'$. Additionally, if 
$e_k \leq e_{k+1} \leq \cdots \leq e_{k+p}$ for some $k \geq K'$, then
$e_{k+p} \geq e_k$, a contradiction.
\end{proof}

Proposition~\ref{prop:nonmonotone-conditions} shows that the $p$-step 
Q-order does not monitor the error sequence step by step.  It monitors the 
$p$ interleaved subsequences $\{e_{j+ip}\}_{i=0}^\infty$, $j = 0, 1, \dots, p-1$. 
Definition~\ref{def:pstepdef} is satisfied precisely when each of these 
subsequences contracts geometrically at a common per-iteration rate. What 
happens between consecutive terms of a subsequence is irrelevant: a single 
step may increase the error by an arbitrarily large factor, provided every 
error remains below a fixed fraction of the error $p$ iterations earlier. 
Therefore, the definition  tolerates non-monotone behavior confined to a 
window of length $p$, but not non-monotone behavior that persists across 
windows. We saw that Example \ref{ex:jaysExample} satisfies 
Definition \ref{def:pstepdef} with $p = 2$ despite oscillating, because its 
increases never last more than one step. On the other hand,  in 
Example \ref{ex:squares}, the non-monotonicity outruns every possible fixed window, 
so the definition fails for all $p$. The next result shows that if Definition~\ref{def:pstepdef} is satisfied for some step $p$, then it is satisfied for every positive integer
multiple of $p$.

\begin{Proposition}\label{prop:multiples}
Let $\{\xk\}$ converge to $\xstar$ with $\xk \neq \xstar$ for all large $k$. 
If Definition~\ref{def:pstepdef} is satisfied for some step $p \in \N$ and order $\alpha \geq 1$, then it 
is satisfied with order $\alpha$  and any step  $mp$ where $m \in \N$.
\end{Proposition}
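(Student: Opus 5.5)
The plan is to iterate the $p$-step inequality $m$ times and read off the $mp$-step factor. The cases $\alpha=1$ and $\alpha>1$ are handled separately because of the two branches in Definition~\ref{def:pstepdef}. I interpret the claim as saying that the same kind of statement carries over from step $p$ to step $mp$:
\begin{itemize}
\item $\overline{Q}_\alpha^{(p)}(x_k)<\infty$ implies $\overline{Q}_\alpha^{(mp)}(x_k)<\infty$;
\item for $\alpha=1$, $\overline{Q}_1^{(p)}(x_k)<1$ implies $\overline{Q}_1^{(mp)}(x_k)<1$.
\end{itemize}
In fact I expect the bound $\overline{Q}_\alpha^{(mp)}(x_k)\le \overline{Q}_\alpha^{(p)}(x_k)$ to hold.

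\emph{Case $\alpha=1$.} Let $q=\overline{Q}_1^{(p)}(x_k)<1$ and fix any $c\in(q,1)$. By the definition of the limit superior, $e_{k+p}\le c^p e_k$ for all $k\ge K$. This is the same step as in Proposition~\ref{prop:satisfied}. Applying this inequality at the indices $k, k+p, \dots, k+(m-1)p$, all of which are at least $K$, and chaining the results gives $e_{k+mp}\le c^{mp}e_k$. Hence $Q_1^{(mp)}(x_k)\le c$ for $k\ge K$. Since $c>q$ was arbitrary, $\overline{Q}_1^{(mp)}(x_k)\le q<1$.

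\emph{Case $\alpha>1$.} Let $M=\overline{Q}_\alpha^{(p)}(x_k)<\infty$ and fix $C>M$. As in the proof of Proposition~\ref{prop:monotoneAlpha}, for $k\ge K$ we get
\[
e_{k+p}\le D\,e_k^{\alpha^p}, \qquad D=C^{\frac{\alpha^p-1}{\alpha-1}}.
\]
I will prove by induction on $m$ that for $k\ge K$,
\[
e_{k+mp}\le D^{\,1+\alpha^p+\cdots+\alpha^{(m-1)p}}\,e_k^{\alpha^{mp}}.
\]
The inductive step applies the one-block bound at the index $k+(m-1)p\ge K$ and then raises the induction hypothesis to the power $\alpha^p>0$, which preserves the inequality. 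The exponent of $D$ then becomes $1+\alpha^p\cdot(\text{previous sum})$, as required.

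The main obstacle is the exponent bookkeeping. The total exponent of $C$ is
\[
\frac{\alpha^p-1}{\alpha-1}\cdot\frac{\alpha^{mp}-1}{\alpha^p-1}=\frac{\alpha^{mp}-1}{\alpha-1},
\]
using the geometric sum in $\alpha^p$. Therefore
\[
\frac{e_{k+mp}}{e_k^{\alpha^{mp}}}\le C^{\frac{\alpha^{mp}-1}{\alpha-1}}.
\]
Raising both sides to the positive power $\frac{\alpha-1}{\alpha^{mp}-1}$ gives $Q_\alpha^{(mp)}(x_k)\le C$ for all $k\ge K$. Since $C>M$ was arbitrary, $\overline{Q}_\alpha^{(mp)}(x_k)\le M<\infty$.

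This is exactly the per-iteration normalization built into Definition~\ref{def:pstepdef}, and it also shows the superorder statement is inherited: if $M=0$, letting $C\downarrow0$ gives $\overline{Q}_\alpha^{(mp)}(x_k)=0$. Finally, $e_k\neq0$ for large $k$ by hypothesis, so all the quotients involved are well defined for $k\ge K$ once $K$ is taken large enough.
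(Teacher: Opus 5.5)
Your proof is correct and follows the same route as the paper's: chain the $p$-step bound $m$ times. Where the paper absorbs the constant through the rescaling $b_k = C^{1/(\alpha^{p}-1)} e_k$, you track it by induction and the geometric sum in $\alpha^{p}$. Because you take $C$ arbitrary above $\overline{Q}_\alpha^{(p)}(x_k)$, you also get the sharper bound $\overline{Q}_\alpha^{(mp)}(x_k) \le \overline{Q}_\alpha^{(p)}(x_k)$, and hence inheritance of superorder, whereas the paper only records finiteness.
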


\begin{proof}
(\emph{Case $\alpha = 1$.}) By
Proposition~\ref{prop:nonmonotone-conditions}, there exists
$c \in (0,1)$ such that $e_{k+p} \leq c^{p}\, e_k$ for all large $k$.
Chaining this bound $m$ times gives the inequality
$e_{k+mp} \leq c^{mp}\, e_k$ for all large $k$.  By
Proposition~\ref{prop:nonmonotone-conditions},
Definition~\ref{def:pstepdef} is satisfied  with step $mp$ and $\alpha=1$.

(\emph{Case $\alpha > 1$.}) Since
$\overline{Q}_\alpha^{(p)}(x_k) < \infty$, there exists $C \geq 1$ such
that $e_{k+p} \leq C\, e_k^{\alpha^{p}}$ for all large $k$. Setting
$b_k = C^{1/(\alpha^{p}-1)} e_k$, this bound is now
$b_{k+p} \leq b_k^{\alpha^{p}}$.  Chaining $m$ times gives
$b_{k+mp} \leq b_k^{\alpha^{mp}}$.  That is,
$e_{k+mp} \leq C^{s}\, e_k^{\alpha^{mp}}$ with
$s = \tfrac{\alpha^{mp}-1}{\alpha^{p}-1}$. Hence
$\overline{Q}_\alpha^{(mp)}(x_k) \leq C^{s} < \infty$. 
Therefore, Definition~\ref{def:pstepdef} is satisfied  with step $mp$ and $\alpha \geq 1$.
\end{proof}

Note that the converse is not true.  It is not necessarily true that if the definition of $p$-step Q-order convergence is satisfied with step 6 and order $\alpha \geq 1$, then it must be  satisfied for step 3 and the same order $\alpha \geq 1.$

Lastly, we clarify the relation between $p$-step Q-order convergence and R-order convergence.

\begin{Theorem}\label{thm:pstep-implies-Rorder}
Let $\{\xk\}$ converge to $\xstar$ with $\xk \neq \xstar$ for all large $k$. 
Let $p \in \N$ and $\alpha \geq 1$. If $\{\xk\}$ has $p$-step Q-order at 
least $\alpha$, then $\{\xk\}$ has R-order at least $\alpha$.
\end{Theorem}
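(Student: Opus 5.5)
The plan is to build the dominating sequence $\{\beta_k\}$ of Definition~\ref{def:rorder} explicitly. I will start from the blockwise inequality supplied by the $p$-step hypothesis and propagate it along each residue class modulo $p$, as in Proposition~\ref{prop:nonmonotone-conditions}. The cases $\alpha=1$ and $\alpha>1$ are handled separately, mirroring the proof of Proposition~\ref{prop:multiples}. In both cases the natural candidate is a sequence whose classical quotient factor $Q_\alpha(\beta_k)$ is constant.

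\emph{Case $\alpha=1$.} By Proposition~\ref{prop:satisfied}, or rather its proof with $\alpha=1$, there are $c\in(0,1)$ and $K$ with $e_{k+p}\le c^p e_k$ for all $k\ge K$, and $e_k>0$ for $k\ge K$.
\begin{itemize}
\item Set $A=\max_{K\le j\le K+p-1} e_j c^{-j}>0$ and $\beta_k=A c^k$.
\item For $j\in\{K,\dots,K+p-1\}$ we have $e_j\le \beta_j$ by the choice of $A$.
\item Induction on $i$ gives $e_{j+ip}\le c^{ip}e_j\le Ac^{j+ip}=\beta_{j+ip}$, so $e_k\le\beta_k$ for all $k\ge K$.
\item Since $\beta_{k+1}/\beta_k=c<1$, the sequence $\{\beta_k\}$ converges at least Q-linearly, which gives R-order at least $1$.
\end{itemize}

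\emph{Case $\alpha>1$.} Since $\overline{Q}_\alpha^{(p)}(x_k)<\infty$, as in Proposition~\ref{prop:multiples} there is $C\ge1$ with $e_{k+p}\le C e_k^{\alpha^p}$ for large $k$. Put $b_k=C^{1/(\alpha^p-1)}e_k$, so that $b_{k+p}\le b_k^{\alpha^p}$.
\begin{itemize}
\item Because $e_k\to0$, we may enlarge $K$ so that $0<b_k<1$ for all $k\ge K$.
\item Define $\theta=\max_{K\le j\le K+p-1} b_j^{\alpha^{-j}}$. This is a maximum of finitely many numbers in $(0,1)$, so $\theta\in(0,1)$, and $b_j\le\theta^{\alpha^j}$ for $j\in\{K,\dots,K+p-1\}$.
\item Induction along each residue class gives $b_{j+(i+1)p}\le b_{j+ip}^{\alpha^p}\le(\theta^{\alpha^{j+ip}})^{\alpha^p}=\theta^{\alpha^{j+(i+1)p}}$. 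Here I use that $t\mapsto t^{\alpha^p}$ is increasing on $[0,\infty)$. Hence $b_k\le\theta^{\alpha^k}$ for all $k\ge K$.
\item Set $\beta_k=C^{-1/(\alpha^p-1)}\theta^{\alpha^k}$. These are positive, tend to $0$, and satisfy $e_k\le\beta_k$ for $k\ge K$.
\item Finally, $\beta_{k+1}/\beta_k^{\alpha}=C^{(\alpha-1)/(\alpha^p-1)}$ is constant. So $\overline{Q}_\alpha(\beta_k)<\infty$, and $\{\beta_k\}$ has Q-order at least $\alpha$ in the sense of Definition~\ref{def:catinas}. This gives R-order at least $\alpha$ for $\{x_k\}$.
\end{itemize}

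The main obstacle is the choice of the starting constant $\theta$ in the case $\alpha>1$. It must control all $p$ initial residues at once, on the doubly exponential scale $\theta^{\alpha^k}$ rather than on a geometric scale. The device of taking the $\alpha^{-j}$-th roots of $b_j$ resolves this. It needs $b_j<1$, which is why $K$ is enlarged first using $e_k\to0$. The remaining steps are routine inductions.
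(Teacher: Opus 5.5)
Your proof is correct and follows essentially the same route as the paper: treat $\alpha=1$ and $\alpha>1$ separately, rescale to $b_k=C^{1/(\alpha^p-1)}e_k$ so that $b_{k+p}\le b_k^{\alpha^p}$, chain the bound along the residue classes modulo $p$, and dominate $e_k$ by $Ac^k$ or by a multiple of $\theta^{\alpha^k}$, whose classical quotient is constant. The differences are only cosmetic and just sharpen the constants: you normalize each starting index separately ($A=\max_j e_jc^{-j}$, $\theta=\max_j b_j^{\alpha^{-j}}$) where the paper uses a uniform bound with an index shift ($M'=Mc^{-(K+p-1)}$, $\gamma=\lambda^{\alpha^{-(K+p-1)}}$), and you keep the factor $C^{-1/(\alpha^p-1)}$ in $\beta_k$ where the paper discards it using $C\ge 1$.
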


\begin{proof}
(\emph{Case $\alpha = 1$.}) By
Proposition \ref{prop:nonmonotone-conditions}, there exist
$c \in (0,1)$ and $K \in \N$ such that 
\begin{equation}\label{eq:boundChain}
e_{k+p} \leq c^{p} e_k \quad  \text{for all
$k \geq K$.}
\end{equation}
For $k \geq K$, write $k = K + j + ip$ with $0 \leq j \leq p-1$.
Chaining the inequality in \eqref{eq:boundChain} $i$ times gives
\begin{equation}\label{eq:mprime}
e_k \leq c^{ip} e_{K+j} \leq M c^{ip} \leq M' c^{k} \quad \text{where} \quad 
M= \max_{0 \leq j \leq p-1} e_{K+j} \quad \text{and} \quad 
M' = M c^{-(K+p-1)}.
\end{equation}
The last inequality in \eqref{eq:mprime} uses
$ip \geq k - K - p + 1$ and $c < 1$. Observe that the sequence
$\beta_k= M' c^{k}$ is positive, converges to $0$ with Q-order $1$,
and satisfies $e_k \leq \beta_k$ for all $k \geq K$.  By
Definition \ref{def:rorder}, $\{\xk\}$ has R-order at least $1$.

\noindent (\emph{Case $\alpha > 1$.}) Since
$\overline{Q}_\alpha^{(p)}(x_k) < \infty$, there exists $C \geq 1$ such
that 
\begin{equation}\label{eq:powerbound}
b_{k+p} \leq b_k^{\alpha^{p}} \quad \text{for all large $k$ and where} \quad  b_k= C^{1/(\alpha^{p}-1)} e_k.
\end{equation}
As $b_k \to 0$,
there exist $K \in \N$ and $\lambda \in (0,1)$ such that both the inequality in \eqref{eq:powerbound}
 and $b_k \leq \lambda$ hold for all $k \geq K$. For $k \geq K$,
write $k = K + j + ip$ with $0 \leq j \leq p-1$.  Chaining the inequality in \eqref{eq:powerbound}
 $i$ times gives
\begin{equation} \label{eq:lambda}
b_k \leq b_{K+j}^{\alpha^{ip}} \leq \lambda^{\alpha^{ip}}
\leq \gamma^{\alpha^{k}} \quad  \text{where} 
\quad \gamma= \lambda^{\alpha^{-(K+p-1)}} \in (0,1).
\end{equation}  
The last inequality in \eqref{eq:lambda}
uses $ip \geq k - K - p + 1$ and $\lambda < 1$. Since $C \geq 1$, we 
obtain 
\begin{equation}
e_k \leq b_k \leq \gamma^{\alpha^{k}} \quad  \text{for all $k \geq K$.} 
\end{equation}
The sequence $\beta_k= \gamma^{\alpha^{k}}$ is positive and satisfies
$\beta_{k+1} = \beta_k^{\alpha}$.  Hence, it converges to $0$ with
Q-order at least $\alpha$.  By Definition~\ref{def:rorder}, $\{\xk\}$
has R-order at least $\alpha$.
\end{proof}

The converse of Theorem \ref{thm:pstep-implies-Rorder} does not necessarily hold.  The sequence 
in Example \ref{ex:squares} has R-order at least $1$, but has no Q-order for any step length. Together with 
Example \ref{ex:jaysExample}, this places the $p$-step Q-order convergence strictly 
between the Q-order convergence and the R-order convergence. Figure \ref{fig:hierarchy} illustrates the three different cnvergence notions.  

\begin{figure}[h]
\centering
\begin{tikzpicture}[every node/.style={font=\small}]
  \definecolor{setgray}{HTML}{F1EFE8}
  \definecolor{setgrayline}{HTML}{5F5E5A}
  \definecolor{setteal}{HTML}{E1F5EE}
  \definecolor{settealline}{HTML}{0F6E56}
  \definecolor{setpurple}{HTML}{EEEDFE}
  \definecolor{setpurpleline}{HTML}{534AB7}
  \filldraw[fill=setgray,   draw=setgrayline,   thick] (0,0)     ellipse (5.9 and 3.5);
  \filldraw[fill=setteal,   draw=settealline,   thick] (0,-0.35) ellipse (4.6 and 2.6);
  \filldraw[fill=setpurple, draw=setpurpleline, thick] (0,-0.75) ellipse (2.3 and 1.35);
  \node at (0, 2.95) {R-order at least $\alpha$};
  \node at (0, 1.6)  {$p$-step Q-order at least $\alpha$, for some $p$};
  \node at (0,-0.6)  {Q-order at least $\alpha$};
  \node at (0,-1.05) {($p=1$)};
  \fill[settealline] (-1.8, 1.0) circle (1.6pt) 
    node[right=2pt, text=black] {Example~\ref{ex:jaysExample} ($p=2$)};
  \fill[setgrayline] (-2.6, 2.55) circle (1.6pt) 
    node[right=2pt, text=black] {Example~\ref{ex:squares}};
\end{tikzpicture}
\caption{The $p$-step Q-order convergence sits strictly between the Q-order and the 
R-order. Example \ref{ex:jaysExample} has $2$-step Q-order at least $2$ but no 
Q-order, and Example~\ref{ex:squares} has R-order at least $1$ but  has no Q-order for any step length.}
\label{fig:hierarchy}
\end{figure}

\FloatBarrier

\section{Conclusion} \label{sec:conclusion}
%%%%%%%%%%%%%%%%%%%%%%%%%%%%%%%%%%%%%%%%%%

In this paper, we introduced the notion of $p$-step Q-order convergence, 
which measures error reduction over blocks of $p$ consecutive iterations. The notion  
reduces to the classical Q-order definition when $p = 1$. The ``normalization'' of 
the $p$-step quotient convergence factors ensures that $\alpha$ retains 
its meaning as a per-iteration order, so that factors associated with 
different block lengths are measured on a common scale. We characterized 
when the definition is satisfied for a given $p$: a geometric contraction 
over $p$ steps, or equivalently the geometric contraction of the $p$ 
interleaved subsequences $\{e_{j+ip}\}_i$ at a common rate. Similar  to the Q-order convergence definition,  We showed 
that lower orders of $\alpha$ are inherited.  Besides,  we showed that  if $p$-step Q-order convergence  is satisfied for some $p$, then it must be satisfied 
for every positive integer multiple  of $p$ as the step.  However, the converse does not necessarily hold.  Another of the main results is that $p$-step Q-order at 
least $\alpha$ implies R-order at least $\alpha$. Examples \ref{ex:jaysExample} and \ref{ex:squares} 
demonstrate that both containments in Figure \ref{fig:hierarchy} are strict: the $p$-step Q-order captures sequences whose non-monotone 
behavior is confined to a fixed window, while sequences with 
irregularities separated by gaps that grow without bound may not satisfy the definition of 
$p$-step Q-order convergence for any $p$, as Example \ref{ex:squares} shows.

Several directions merit further investigation. A natural next step is to 
apply the framework to concrete iterative methods whose update rules 
alternate or cycle over multiple steps, such as multi-step solvers, 
block-iterative procedures, and optimization methods with non-monotone 
line searches.  Second, the question of estimating the pair $(p, \alpha)$ 
numerically from a finite sequence of iterates in the spirit of the 
computational convergence orders surveyed in \cite{catinas2019survey}, deserves attention. Extending the theory to sequences of vectors, where the choice 
of norm may affect the definition of sublinear could be investigated.  Finally, studying the smallest  valid step $p$ for  a given sequence is  left as future work.

%%%%%%%%%%%%%%%%%%%%%%%%%%%%%%%%%%%%%%%%%%
%\section{Patents}

%This section is not mandatory but may be added if there are patents resulting from the work reported in this manuscript.

%%%%%%%%%%%%%%%%%%%%%%%%%%%%%%%%%%%%%%%%%%
\vspace{6pt} 

%%%%%%%%%%%%%%%%%%%%%%%%%%%%%%%%%%%%%%%%%%
%% optional
%\supplementary{The following supporting information can be downloaded at \linksupplementary{s1}, Figure S1: title; Table S1: title; Video S1: title.}

% Only for journal Methods and Protocols:
% If you wish to submit a video article, please do so with any other supplementary material.
% \supplementary{The following supporting information can be downloaded at \linksupplementary{s1}, Figure S1: title; Table S1: title; Video S1: title. A supporting video article is available at doi: link.}

% Only used for preprtints:
% \supplementary{The following supporting information can be downloaded at the website of this paper posted on \href{https://www.preprints.org/}{Preprints.org}.}

% Only for journal Hardware:
% If you wish to submit a video article, please do so with any other supplementary material.
% \supplementary{The following supporting information can be downloaded at \linksupplementary{s1}, Figure S1: title; Table S1: title; Video S1: title.\vspace{6pt}\\
%\begin{tabularx}{\textwidth}{lll}
%\toprule
%\textbf{Name} & \textbf{Type} & \textbf{Description} \\
%\midrule
%S1 & Python script (.py) & Script of python source code used in XX \\
%S2 & Text (.txt) & Script of modelling code used to make Figure X \\
%S3 & Text (.txt) & Raw data from experiment X \\
%S4 & Video (.mp4) & Video demonstrating the hardware in use \\
%... & ... & ... \\
%\bottomrule
%\end{tabularx}
%}

%%%%%%%%%%%%%%%%%%%%%%%%%%%%%%%%%%%%%%%%%%

\funding{This research was funded by  the American University of Sharjah, Faculty Research Grant  FRG26-S27.}

\institutionalreview{Not applicable}

\informedconsent{Not applicable}

\dataavailability{No new data were created or analyzed in this study. Data sharing is not applicable to this article.}

\acknowledgments{During the preparation of this manuscript, the author used Claude Fable 5 for the purposes of rephrasing some sentences, proofreading, and to generate Figure \ref{fig:hierarchy}. The author have reviewed and edited the output and take full responsibility for the content of this publication.}

\conflictsofinterest{The authors declare no conflicts of interest. The funders had no role in the design of the study; in the collection, analyses, or interpretation of data; in the writing of the manuscript; or in the decision to publish the results.} 

%%%%%%%%%%%%%%%%%%%%%%%%%%%%%%%%%%%%%%%%%%

%%%%%%%%%%%%%%%%%%%%%%%%%%%%%%%%%%%%%%%%%%
%\isPreprints{}{% This command is only used for ``preprints''.
%\begin{adjustwidth}{-\extralength}{0cm}
%} % If the paper is ``preprints'', please uncomment this parenthesis.
%\printendnotes[custom] % Un-comment to print a list of endnotes

\reftitle{References}

% References must be numbered in order of appearance in the text (including citations in tables and legends) and listed individually at the end of the manuscript. We recommend preparing the references with a bibliography software package, such as EndNote, ReferenceManager or Zotero, to avoid typing mistakes and duplicated references. Include the digital object identifier (DOI) for all references where available.
% Please provide either the correct journal abbreviation (e.g. according to the “The list of Title Word Abbreviations” https://portal.issn.org/ltwa) or the full name of the journal. 
% Citations and references in the Supplementary Materials are permitted provided that they also appear in the reference list here. 
% In the text, reference numbers should be placed in square brackets [ ] and placed before the punctuation; for example, [1], [1–3] or [1,3]. For embedded citations in the text with pagination, use both parentheses and brackets to indicate the reference number and page numbers; for example, [5] (p. 10), or [6] (pp. 101–105).

%=====================================
% References, variant A: external bibliography
%=====================================
% \bibliography{your_external_BibTeX_file}

%=====================================
% References, variant B: internal bibliography
%=====================================

% ACS format

%\bibliographystyle{siam}
\bibliography{bibliography}

% If authors have biography, please use the format below
%\section*{Short Biography of Authors}
%\bio
%{\raisebox{-0.35cm}{\includegraphics[width=3.5cm,height=5.3cm,clip,keepaspectratio]{Definitions/author1.pdf}}}
%{\textbf{Firstname Lastname} Biography of first author}
%
%\bio
%{\raisebox{-0.35cm}{\includegraphics[width=3.5cm,height=5.3cm,clip,keepaspectratio]{Definitions/author2.jpg}}}
%{\textbf{Firstname Lastname} Biography of second author}

% For the MDPI journals use author-date citation, please follow the formatting guidelines on http://www.mdpi.com/authors/references
% To cite two works by the same author: \citeauthor{ref-journal-1a} (\citeyear{ref-journal-1a}, \citeyear{ref-journal-1b}). This produces: Whittaker (1967, 1975)
% To cite two works by the same author with specific pages: \citeauthor{ref-journal-3a} (\citeyear{ref-journal-3a}, p. 328; \citeyear{ref-journal-3b}, p.475). This produces: Wong (1999, p. 328; 2000, p. 475)

%%%%%%%%%%%%%%%%%%%%%%%%%%%%%%%%%%%%%%%%%%
%% for journal Sci
%\reviewreports{\\
%Reviewer 1 comments and authors’ response\\
%Reviewer 2 comments and authors’ response\\
%Reviewer 3 comments and authors’ response
%}
%%%%%%%%%%%%%%%%%%%%%%%%%%%%%%%%%%%%%%%%%%
\PublishersNote{}
%\isPreprints{}{% This command is only used for ``preprints''.
%\end{adjustwidth}
%} % If the paper is ``preprints'', please uncomment this parenthesis.
\end{document}